\documentclass[11pt,reqno]{amsart}
\usepackage{amsmath,amsthm,amssymb,mathrsfs,stmaryrd,color}
\usepackage[all]{xy}
\usepackage{url}
\usepackage{slashed}
\usepackage{geometry}

\usepackage[utf8]{inputenc}
\usepackage[T1]{fontenc}

\usepackage{relsize}
\usepackage[bbgreekl]{mathbbol}
\usepackage{amsfonts}
\usepackage{enumitem}
\setlist[enumerate]{itemsep=2pt,parsep=2pt,before={\parskip=2pt}}
\usepackage[colorlinks=true,hyperindex,linkcolor=magenta,pagebackref=false,citecolor=cyan,pdfpagelabels]{hyperref}

\DeclareMathOperator{\RGamma}{R\Gamma}
\DeclareMathOperator{\WCart}{WCart}
\DeclareMathOperator{\Spf}{Spf}

\DeclareMathOperator{\conj}{conj}
\DeclareMathOperator{\Fil}{Fil}
\DeclareMathOperator{\gr}{gr}

\DeclareMathOperator{\Frac}{Frac}
\DeclareMathOperator{\trdeg}{trdeg}
\DeclareMathOperator{\cd}{cd}

\DeclareMathOperator{\im}{im}
\newcommand{\Z}{\mathbf Z}
\newcommand{\F}{\mathbf F}

\newcommand{\calO}{\mathcal O}

\DeclareSymbolFontAlphabet{\mathbb}{AMSb}
\DeclareSymbolFontAlphabet{\mathbbl}{bbold}
\newcommand{\Prism}{\mathbbl{\Delta}}

\newcommand{\Fp}{\mathbf{F}_p}
\newcommand{\Zp}{\mathbf{Z}_p}
\newcommand{\Prismbar}{\overline{\Prism}}
\newcommand{\wcartHT}{\WCart^{\mathrm{HT}}}
\newcommand{\Dle}[1]{\mathcal{D}^{\leq #1}}

\usepackage{cleveref}
\newtheorem{theorem}{Theorem}[section]
\newtheorem{proposition}[theorem]{Proposition}
\newtheorem{lemma}[theorem]{Lemma}

\newtheorem{conjecture}[theorem]{Conjecture}
\theoremstyle{remark}
\newtheorem{remark}[theorem]{Remark}
\crefname{remark}{Remark}{Remarks}
\crefname{theorem}{Theorem}{Theorems}
\crefname{proposition}{Proposition}{Propositions}
\crefname{lemma}{Lemma}{Lemmas}
\crefname{corollary}{Corollary}{Corollaries}
\crefname{conjecture}{Conjecture}{Conjectures}
\crefname{section}{Section}{Sections}

\begin{document}

\title{A Counterexample to Bhatt--Lurie's Cohomological Dimension Conjecture}
\author{Guo Li}
\subjclass[2020]{Primary 14F30; Secondary 14A30, 14D23, 13H05, 13F40}
\keywords{Hodge--Tate stack, cohomological dimension, excellent ring.}
\address{School of Mathematical Sciences, University of Chinese Academy of Sciences, No. 19A Yuquan Road,  Beijing 100049, China}
\email{liguo22@mails.ucas.ac.cn}
\date{}

\begin{abstract}
We exhibit a counterexample to a conjecture of Bhatt--Lurie on the cohomological dimension of the Hodge--Tate locus for regular local rings. The example arises from a non-excellent discrete valuation ring constructed by Datta--Smith, closely related to an earlier example of Bosch--L\"{u}tkebohmert--Raynaud. We also explain how the same mechanism yields broader families of counterexamples, while the expected bound is recovered under an excellence hypothesis.
\end{abstract}

\maketitle

\tableofcontents

\section{Introduction}

A fundamental contribution of \cite{BL22a,BL22b,drinfeld2024prismatization} is the geometric realization of prismatic cohomology, originally introduced by Bhatt and Scholze \cite{BS22} as a unifying framework for $p$-adic cohomology theories. More precisely, to every $p$-adic formal scheme $X$ one associates the Cartier--Witt stack  $\WCart_X$, its \emph{prismatization}. Under suitable hypotheses, quasicoherent sheaves on $\WCart_X$ can be identified with prismatic crystals on $X$. This viewpoint yields, among other things, a conceptual reproof of the Hodge--Tate comparison theorem, a stack-theoretic interpretation of the prismatic logarithm, and a geometric framework for studying Hodge--Tate phenomena of $p$-adic formal schemes via the Hodge--Tate locus
\[
\wcartHT_X \subset \WCart_X.
\]

Motivated by the heuristic that regular rings behave as if they were formally smooth ``over $\F_1$'', Bhatt and Lurie formulate the following conjecture.

\begin{conjecture}[{\cite[Conjecture~10.1]{BL22b}}]\label[conjecture]{conj:original}
Let $R$ be a $p$-complete noetherian regular local ring with perfect residue field. Then the functor $\RGamma(\wcartHT_{\Spf(R)}, -)$ carries $\Dle{0}$ to $\Dle{\dim(R)}$.
\end{conjecture}

In this note, we show that the non-excellent DVR $V_p$ of equal characteristic $p$ constructed in \cite{datta2018excellenceprimecharacteristic} (see also \cref{sec:Vp}) is a counterexample to \cref{conj:original}. The basic idea of this construction goes back to an earlier example in Bosch--L\"{u}tkebohmert--Raynaud, see \cref{rem:BLR}.

\begin{theorem}[Counterexample]\label[theorem]{thm:main}
We have $H^2(\wcartHT_{\Spf(V_p)}, \calO) \simeq \Omega^2_{V_p/\Fp}\{-2\} \neq 0$. In particular, \cref{conj:original} is false.
\end{theorem}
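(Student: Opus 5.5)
The plan is to compute $\RGamma(\wcartHT_{\Spf(V_p)},\calO)$ using the Hodge--Tate comparison together with the structure of the Hodge--Tate stack in characteristic $p$. Since $V_p$ is an $\Fp$-algebra, $p=0$ in $V_p$. For any $p$-complete ring $R$, \cite{BL22b} identifies $\RGamma(\wcartHT_{\Spf R},\calO)$ with the absolute Hodge--Tate cohomology $\overline{\Prism}_R$. This complex carries an increasing conjugate filtration, and its graded pieces are
\[
\gr_i^{\conj}\overline{\Prism}_R \simeq (\wedge^i L_{R/\Zp})[-i]\{-i\}.
\]
Here we use the derived exterior powers of the $p$-completed cotangent complex.

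The first step is to compute $L_{V_p/\Zp}$. Transitivity for $\Zp\to\Fp\to V_p$ gives a cofiber sequence
\[
V_p\otimes L_{\Fp/\Zp}\to L_{V_p/\Zp}\to L_{V_p/\Fp}, \qquad L_{\Fp/\Zp}\simeq \Fp[1].
\]
The second step is the key input from the Datta--Smith construction (\cref{sec:Vp}). I will use that $V_p$ is a DVR whose fraction field $K$ has $[K:K^p]$ infinite, or at least that $\Omega^1_{V_p/\Fp}$ is large. By the Cartier--Gabber theory of regular $\Fp$-algebras (or directly, since $V_p$ is a regular noetherian local ring, hence a filtered colimit of smooth $\Fp$-algebras by Popescu only if excellent, which it is not), I cannot use Popescu. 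Instead I will use that $V_p$ is a DVR: it is flat over $\Fp$, and $L_{V_p/\Fp}$ has Tor-amplitude in $[-1,0]$. In fact $H^{-1}(L_{V_p/\Fp})$ vanishes, because in characteristic $p$ reduced rings have $L$ with $H^{-1}$ controlled via Frobenius.

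Rather than settle this in general, I will check directly the statement that actually matters, namely $H^2$. The plan is to show that $H^2(\overline{\Prism}_{V_p})$ receives the contribution $\Omega^2_{V_p/\Fp}\{-2\}$ from $\gr_2$, and that all other graded pieces contribute nothing in degree $2$ or degree $3$.

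The third step is the bookkeeping in the filtration spectral sequence. The Bockstein-type piece $V_p[1]$ coming from $L_{\Fp/\Zp}$ makes $\wedge^i L_{V_p/\Zp}$ into an extension involving divided powers $\Gamma^j(V_p[1])=V_p[j]$ tensored with $\wedge^{i-j}L_{V_p/\Fp}$. These sit in cohomological degrees $\geq i-j-j$ after the shift.

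The cleaner route is to use the known description of the Hodge--Tate stack of an $\Fp$-algebra, $\wcartHT_{\Spf R}\simeq$ a gerbe over $\Spec R$ banded by $G_a^\sharp$-type data twisted by $L_{R/\Fp}$. This gives $\RGamma(\wcartHT,\calO)\simeq$ the de Rham-like complex $\bigoplus_i \wedge^i L_{R/\Fp}[-i]$ after completing. When $L_{V_p/\Fp}\simeq\Omega^1_{V_p/\Fp}$ is flat, this yields $H^2=\Omega^2_{V_p/\Fp}\{-2\}$.

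The final step is to show $\Omega^2_{V_p/\Fp}\neq0$. Here I use the construction: $\Omega^1_{V_p/\Fp}$ has rank at least $2$ generically, since $K$ has a $p$-basis with at least two elements, coming from the transcendental power series used in the Datta--Smith/BLR example. Hence its second exterior power is nonzero. Since $\dim V_p=1<2$, \cref{conj:original} fails.

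I expect the main obstacle to be the cotangent complex computation for a non-excellent ring. Specifically, I need to prove $L_{V_p/\Fp}\simeq\Omega^1_{V_p/\Fp}[0]$ with $\Omega^1$ flat, and that the conjugate filtration splits or degenerates in degree $2$. For the cotangent complex, I would try writing $V_p$ as a filtered colimit of smooth algebras over $\Fp$ localized suitably. Failing that, I would use that $V_p$ is a DVR whose Frobenius is flat (Kunz), so $\Omega^1$ is flat by Gabber's result that $H^{-1}L$ vanishes for Frobenius-flat rings.
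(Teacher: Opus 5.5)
There is a genuine gap. It sits where you expected it, in the flatness of $L_{V_p/\Fp}$, and it is compounded by two errors in the setup.

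\textbf{The filtration.} The conjugate filtration you write down is not the one on $\Prismbar_R$. The pieces $\wedge^i L_{R/\Zp}[-i]$ are those of the diffracted Hodge complex, whose $\Theta$-fibre is $\Prismbar_R$. A quick test with $R=\Fp$ shows the problem: $\Prismbar_{\Fp}=\Fp$, but $\wedge^i(\Fp[1])[-i]\simeq\Fp$ in degree $0$ for every $i$.

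For $V_p$, the extension $V_p[1]\to L_{V_p/\Zp}\to\Omega^1_{V_p/\Fp}$ gives $\wedge^iL_{V_p/\Zp}[-i]$ graded pieces $\Omega^{i-j}_{V_p/\Fp}$ sitting in degree $i-j$, not $i-2j$. So every $\gr_i$ with $i\ge 2$ carries a copy of $\Omega^2_{V_p/\Fp}$ in degree $2$. Your step-three bookkeeping therefore cannot isolate $\gr_2$.

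What you need, and what the paper uses, is the following. For an $\Fp$-algebra, the absolute theory coincides with the theory relative to the crystalline prism $(\Zp,(p))$. Its Hodge--Tate comparison gives $\gr_i^{\conj}\Prismbar_{R/\Zp}\simeq L\widehat\Omega^i_{R/\Fp}[-i]\{-i\}$ for \emph{every} $R$. This makes the gerbe description unnecessary. That description is in any case only proved under hypotheses $V_p$ fails, and the direct-sum splitting you assert is not justified.

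Once $L_{V_p/\Fp}$ is flat in degree $0$, its derived exterior powers are the ordinary ones. The conjugate filtration is then the Postnikov filtration, and $H^n\simeq\Omega^n_{V_p/\Fp}\{-n\}$ with no splitting needed.

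Similarly, $\RGamma(\wcartHT_{\Spf R},\calO)\simeq\Prismbar_R$ is not available for an arbitrary $p$-complete $R$. The paper uses a comparison requiring a Tor-amplitude condition on the cotangent complex, which again comes from the flatness of $L_{V_p/\Fp}$.

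\textbf{The flatness of $L_{V_p/\Fp}$.} This is never established, and your reason for abandoning Popescu is wrong. Popescu's theorem is applied to the map $\Fp\to V_p$, and that map is regular. It is flat, and its only fibre $V_p$ is geometrically regular because $\Fp$ is perfect: for finite $L/\Fp$, $V_p\otimes_{\Fp}L$ is \'etale over $V_p$, hence regular. Excellence of $V_p$ is a condition on its formal fibres and plays no role here.

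Hence $V_p=\varinjlim S_\alpha$ with each $S_\alpha$ smooth over $\Fp$, and $L_{V_p/\Fp}\simeq\varinjlim\Omega^1_{S_\alpha/\Fp}[0]=\Omega^1_{V_p/\Fp}[0]$ is flat. Equivalently, regular noetherian $\Fp$-algebras are Cartier smooth.

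Your substitutes do not do this job:
\begin{itemize}
\item Tor-amplitude in $[-1,0]$ is not automatic for an algebra that is not of finite type.
\item There is no general principle by which reducedness controls $H^{-1}$ ``via Frobenius''.
\item Vanishing of $H^{-1}$ alone gives neither flatness of $\Omega^1$ nor vanishing of $H^{-i}$ for $i\ge 2$. The reduced node $k[x,y]/(xy)$ has $H^{-1}(L)=0$ but non-flat $\Omega^1$.
\end{itemize}

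Your final step, $\Omega^2\neq 0$ from the generic rank $2$ of $\Omega^1$ over $k(x,y)$, matches the paper. Note, however, that $[K:K^p]=p^2$ is finite for $K=k(x,y)$; the non-$F$-finiteness of $V_p$ is not visible on its fraction field.
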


The computation is carried out in \cref{sec:comp}. We also explain why the limit argument proposed in \cite[Remark~10.2]{BL22b} fails for $V_p$ (see \cref{rem:limit-failure}). Moreover,
\begin{itemize}
    \item The counterexample is quite flexible: the construction produces uncountably many DVRs, works over function fields of arbitrarily large transcendence degree, and extends to regular local rings of arbitrary Krull dimension (see \cref{rem:more-examples}).
    \item On the positive side, we observe that the conjecture holds under the additional hypothesis that $R$ is excellent, a result essentially contained in \cite{BM_2023} (see \cref{rem:excellent-repair}). This gives a natural repair of  \cref{conj:original}.
\end{itemize}

\noindent\textbf{Acknowledgments.}
The author thanks Yichao Tian, Jiahong Yu, and Zhouhang Mao for valuable comments on this note. In particular, Jiahong Yu pointed out that the result in \cref{prop:Hn} can be derived from the general theory of $p$-Cartier smooth algebras developed in \cite{B23} (see \cref{rem:relation-B23} for a discussion). 
The author also thanks the Rethlas system developed at Peking University for assistance in locating relevant references, including the notion of Cartier smoothness in \cite{BL22a} and the alternative proof of \cref{lem:regular} in \cite{popescu2021valuationringsdimensionlimits}.

\section{The DVR $V_p$}\label[section]{sec:Vp}

Virtually all Noetherian rings arising in algebraic geometry and number theory are excellent.
We refer to \cite[Section~34]{Matsumura1980CommutativeAlgebra} for the definition and basic properties of excellent rings. An important fact is that any finitely generated algebra (hence taking quotient) over an excellent ring is again excellent.

We recall the construction of Datta--Smith \cite[Section~4.1]{datta2018excellenceprimecharacteristic}. Let $k = \overline{\Fp}$ and $K = k(x,y)$. Choose a non-unit power series $p(t) \in k[[t]]$ which is not algebraic over $k(t)$; such power series exist because $k(t)$ is countable whereas $k[[t]]$ is uncountable. The ring homomorphism
\[
k[x,y] \hookrightarrow k[[t]], \qquad x \mapsto t,\; y \mapsto p(t)
\]
is injective (otherwise $p(t)$ would be algebraic over $k(t)$) and induces an inclusion $K \hookrightarrow k((t))$ of fraction fields. Restricting the $t$-adic valuation on $k((t))$ to $K$ yields a discrete valuation with value group $\Z$; its valuation ring is
\[
V_p := k[[t]] \cap K,
\]
with maximal ideal generated by $x$ (since $v(x) = 1$). The inclusion $V_p \hookrightarrow k[[t]]$ induces $V_p/\mathfrak{m}_{V_p} \cong k$.

\begin{lemma}\label[lemma]{lem:Vp}
The ring $V_p$ satisfies:
\begin{enumerate}
\item $V_p$ is a DVR of equal characteristic $p$; in particular, a noetherian regular local ring of Krull dimension $1$ with perfect residue field $k$.
\item $\Frac(V_p) = k(x,y)$ and $V_p/\mathfrak{m}_{V_p} \cong k$.
\item $V_p$ is not excellent and not $F$-finite.
\item $V_p$ is $p$-complete but not complete with respect to its maximal ideal.
\item $V_p$ is not smooth over $k$.
\end{enumerate}
\end{lemma}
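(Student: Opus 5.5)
Parts (1), (2) and (4) follow from the construction. Part (3) is the main content and rests on the structure of $V_p^{1/p}$ over $V_p$. Part (5) follows from (3).

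\emph{Parts (1), (2), (4).} The valuation $v$ on $K$ is the restriction of the $t$-adic valuation, and $v(x)=1$, so $v(K^\times)=\Z$. Hence $V_p=\{f\in K: v(f)\ge 0\}=k[[t]]\cap K$ is a DVR with uniformizer $x$. It contains $k$, so it has characteristic $p$. Its residue field injects into $k[[t]]/(t)=k$ and contains $k$, so it equals $k$, which is perfect. A DVR is noetherian, regular and of dimension $1$. The inclusions $k[x,y]\subset V_p\subset K$ give $\Frac(V_p)=K$.

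Since $p=0$ in $V_p$, the ring is trivially $p$-complete. It is not $\mathfrak m$-adically complete: its completion is $k[[t]]$ (the $t$-adic completion of $V_p\supset k[t]$), which has transcendence degree $\infty$ over $k$ when read through the uncountable fraction field. More simply, if $V_p$ were complete, then $V_p=\widehat{V_p}=k[[t]]$ and $K=k((t))$, which is absurd since $K$ is countable.

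\emph{Part (3).} I will use Kunz's theorem: a noetherian $F$-finite ring is excellent. So it suffices to show that $V_p$ is not excellent, and then $V_p$ is not $F$-finite either.

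For non-excellence, I will show the generic formal fiber is not geometrically regular. That fiber is $k((t))\otimes_K L$ for finite purely inseparable extensions $L/K$, and for a DVR this is equivalent to $k((t))/K$ being non-separable. Take $L=K^{1/p}=K(x^{1/p},y^{1/p})$, so $[L:K]=p^2$. Separability fails iff $k((t))\otimes_K K^{1/p}$ is non-reduced, i.e. iff $x^{1/p}$ and $y^{1/p}$ are $p$-dependent over $k((t))$. Now $[k((t))^{1/p}:k((t))]=p$ with $p$-basis $t$, and $x=t$, $y=p(t)$ are elements of $k((t))$, so $k((t))(x^{1/p},y^{1/p})=k((t))(t^{1/p})$ has degree $p<p^2$. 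Hence the tensor product is non-reduced, the generic formal fiber is not geometrically reduced, and $V_p$ is not excellent.

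Equivalently, in the $F$-finite formulation: for a DVR, $F$-finiteness forces $[K:K^p]=[\widehat K:\widehat K^p]$ (a standard fact, e.g. Datta--Smith), whereas here $[K:K^p]=p^2$ and $[k((t)):k((t))^p]=p$.

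\emph{Part (5).} If $V_p$ were smooth over $k$, it would be essentially of finite type over $k$ (smoothness includes local finite presentation). Such rings are excellent and $F$-finite, contradicting (3). If instead one reads ``smooth'' as formally smooth, then $\Omega_{V_p/k}\otimes K=\Omega_{K/k}$ has rank $2>1=\dim V_p$, which is incompatible with geometric regularity considerations. I will use the essentially-finite-type argument.

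The main obstacle is Part (3), specifically correctly invoking the criterion linking excellence of a DVR to separability of $\widehat K/K$. I will cite Datta--Smith, Section~4.1, or Matsumura, Section~34, for this step.
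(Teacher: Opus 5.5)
Your proof is correct, but it takes a more self-contained route than the paper's.

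\begin{itemize}
\item \emph{The paper's route.} It cites Datta--Smith for (1)--(3) wholesale. It then derives (4) and (5) from non-excellence using general stability facts: complete noetherian local rings are excellent, and finite-type algebras over the excellent field $k$ are excellent.
\item \emph{Your route.} You read off (1)--(2) directly from the valuation. You prove non-excellence by checking that the generic formal fibre $k((t))$ is not geometrically reduced over $K=k(x,y)$: since $[K:K^p]=p^2$ while $[k((t)):k((t))^p]=p$, the ring $k((t))\otimes_K K^{1/p}$ is a non-reduced local Artinian ring. You get non-$F$-finiteness from Kunz's theorem. You prove (4) by a countability argument ($V_p\subset K$ is countable, while $\widehat{V_p}\cong k[[t]]$ is not), which is independent of (3). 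Only (5) coincides with the paper's argument.
\end{itemize}

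Your approach buys transparency. It shows concretely that the failure of excellence comes from the $p$-degree dropping from $K$ to $\widehat K$. This is closely related to the rank jump of $\Omega^1_{V_p/\Fp}$ (generic rank $2$, closed fibre of dimension at most $1$) used in \cref{rem:excellent-repair}. The paper's route is shorter but treats non-excellence as a black box.

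Two cosmetic points, neither of which affects the proof since you rely on the countability and finite-type arguments:
\begin{itemize}
\item The sentence about transcendence degree in (4) is muddled and can be dropped; only the countability argument is needed.
\item The aside about formal smoothness in (5) is wrong as stated. Since $V_p$ is regular and $k$ is perfect, $V_p$ is geometrically regular, hence formally smooth over $k$ for the $\mathfrak m$-adic topology, so no geometric-regularity argument can rule that out. For the discrete topology, the genuine obstruction is that $\Omega^1_{V_p/k}$ is not projective.
\end{itemize}
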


\begin{proof}
Items (1)--(3) are proved in \cite[Section~4.1]{datta2018excellenceprimecharacteristic}. For (4): $p = 0$ in $V_p$ gives $p$-completeness trivially. By \cite[\href{https://stacks.math.columbia.edu/tag/07QW}{Tag 07QW}]{stacks-project}, Noetherian complete local rings are excellent, hence $V_p$ is not complete with respect to its maximal ideal by (3). For (5): $k = \overline{\Fp}$ is perfect, hence excellent; any smooth algebra (in particular of finite type) over an excellent ring is excellent, while $V_p$ is not excellent.
\end{proof}

\begin{remark}
By the above lemma, the description of $\wcartHT_{X}$ as a classifying stack $BG$ in \cite[Proposition~9.5]{BL22b} does not apply to $V_p$. We therefore adopt a different approach to the computation.
\end{remark}

\begin{remark}\label[remark]{rem:BLR}
A predecessor of the above construction appears in
\cite[Example~11 of Section~3.6]{BLR90}. In that example, one chooses an element $\xi\in \mathbb F_p[[T]]$ transcendental over
$\mathbb F_p(T)$. Consider
\[
R_{\mathrm{BLR}} := \mathbb F_p(T,\xi^p)\cap \mathbb F_p[[T]],
\]
where the intersection is taken inside $\mathbb F_p((T))$. This is shown there to be a DVR which is not excellent.
Thus the basic philosophy of producing non-excellent DVRs by intersecting a function field inside a formal power series ring goes back at least to \cite{BLR90}. We use the Datta--Smith formulation in this note  because it is convenient for producing the variants considered in \cref{rem:more-examples}.
\end{remark}

\begin{remark}
  A famous example of a Noetherian but non-excellent ring was constructed by Nagata: a two-dimensional local Noetherian domain which is catenary but not universally catenary, hence not excellent by definition. This example is recorded in \cite[\href{https://stacks.math.columbia.edu/tag/02JE}{Tag 02JE}]{stacks-project}; see also \cite[Example 14.1]{Matsumura1980CommutativeAlgebra}. However, Nagata's example does not serve our purpose.
\end{remark}

\section{Computation of $H^2(\wcartHT_{\Spf(V_p)}, \calO)$}\label[section]{sec:comp}

\subsection{Cotangent complex and reduction to Hodge--Tate complex}

Recall by \cite[\href{https://stacks.math.columbia.edu/tag/00OD}{Tag 00OD}]{stacks-project}, a Noetherian ring is said to be regular if all its localizations at prime ideals are regular local rings.
\begin{lemma}\label[lemma]{lem:regular fact}
 We have the following key facts:
    \begin{enumerate}
        \item A regular local ring is regular.
        \item Let $\varphi: R\to S$ be a smooth ring map with $R$ regular, then $S$ is regular.
    \end{enumerate}
\end{lemma}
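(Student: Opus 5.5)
The two items are standard, and the plan is to prove each directly from the definition recalled just above (a Noetherian ring is regular if all its localizations at primes are regular local rings), citing the Stacks Project for the substantive input.

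For (1), let $R$ be a regular local ring. It is Noetherian, so the only thing to check is that $R_{\mathfrak p}$ is a regular local ring for every prime $\mathfrak p$. This is Serre's theorem that the localization of a regular local ring at a prime is again regular. I would quote it (Stacks, Tag 00OE, or \cite[Theorem~19.3]{Matsumura1980CommutativeAlgebra}) rather than reprove it. The argument behind it is the homological characterization: a Noetherian local ring is regular if and only if it has finite global dimension. Finite global dimension passes to localizations, since localization is exact and carries projective resolutions to projective resolutions.

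For (2), let $\varphi\colon R\to S$ be smooth with $R$ regular. Then $S$ is of finite presentation over a Noetherian ring, hence Noetherian. Take a prime $\mathfrak q\subset S$ and set $\mathfrak p=\varphi^{-1}(\mathfrak q)$. The map $R_{\mathfrak p}\to S_{\mathfrak q}$ is a flat local homomorphism. Its closed fiber $S_{\mathfrak q}/\mathfrak p S_{\mathfrak q}$ is a localization of $S\otimes_R\kappa(\mathfrak p)$, which is smooth over the field $\kappa(\mathfrak p)$ and therefore regular. The base $R_{\mathfrak p}$ is regular by hypothesis. The standard ascent result for flat local homomorphisms with regular base and regular closed fiber (\cite[Theorem~23.7]{Matsumura1980CommutativeAlgebra}, Stacks Tag 00OF) then gives that $S_{\mathfrak q}$ is regular. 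Alternatively, one can cite Stacks Tag 07NF ("smooth over regular is regular") directly.

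The only step with real content is the regularity of the fibers, namely that a smooth algebra over a field is regular. I would handle it by cited references rather than by hand. The argument is that smoothness over a field is preserved by base change to the algebraic closure $\overline{\kappa}$. Over $\overline{\kappa}$, the ring is étale-locally a polynomial ring, and such a ring is regular. Regularity then descends along the faithfully flat map back to the original ring. The ascent theorem itself is routine: flatness gives $\dim S_{\mathfrak q}=\dim R_{\mathfrak p}+\dim(\text{fiber})$, and a regular system of parameters of $R_{\mathfrak p}$ together with lifts of one for the fiber generates $\mathfrak q S_{\mathfrak q}$.
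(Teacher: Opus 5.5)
Your proposal is correct and follows the same route as the paper, which proves both items simply by citing the Stacks Project (Tags 0AFS and 07NF). Your sketch, via Serre's global-dimension criterion for (1) and flat ascent over a regular closed fibre for (2), is the content behind those citations, and your fallback of citing Tag 07NF directly is exactly what the paper does (one bibliographic nit: Theorem~19.3 and Theorem~23.7 are numbered as in Matsumura's \emph{Commutative Ring Theory}, not the 1980 \emph{Commutative Algebra} the paper's bibliography key refers to).
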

\begin{proof}
   See \cite[\href{https://stacks.math.columbia.edu/tag/0AFS}{Tag 0AFS}]{stacks-project} \& \cite[\href{https://stacks.math.columbia.edu/tag/07NF}{Tag 07NF}]{stacks-project}.
\end{proof}

\begin{lemma}\label[lemma]{lem:cotangent}
$V_p$ is Cartier smooth over $\Fp$ in the sense of \cite[Definition~E.10]{BL22a}. Consequently, $L_{V_p/\Fp}$ is a flat $V_p$-module concentrated in degree $0$.
\end{lemma}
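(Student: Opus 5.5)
The plan is to realize $V_p$ as a filtered colimit of smooth $\Fp$-algebras and then use that Cartier smoothness passes to filtered colimits. Recall that, in the sense of \cite[Definition~E.10]{BL22a}, an $\Fp$-algebra $R$ is Cartier smooth if two conditions hold. First, $L_{R/\Fp}$ is a flat $R$-module concentrated in degree $0$, so that $L_{R/\Fp}\simeq \Omega^1_{R/\Fp}$. Second, the inverse Cartier operator $C^{-1}\colon \Omega^i_{R/\Fp}\to H^i(\Omega^\bullet_{R/\Fp})$ is an isomorphism for all $i$. The second assertion of \cref{lem:cotangent} is just the first of these conditions, so only the Cartier smoothness of $V_p$ needs proof.

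\textbf{Step 1.} The structure map $\Fp\to V_p$ is a regular morphism. It is flat, since $V_p$ is a domain of characteristic $p$. Its only fibre is $V_p$ itself, which is regular by \cref{lem:Vp}(1) and \cref{lem:regular fact}(1). Because $\Fp$ is perfect, every finite extension $\Fp\to L$ is separable, so $V_p\otimes_{\Fp}L$ is étale over $V_p$ and hence regular by \cref{lem:regular fact}(2). Thus $V_p$ is geometrically regular over $\Fp$.

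Note that excellence of $V_p$ is never needed here. The point is that Popescu's theorem only asks for regularity of the map $\Fp\to V_p$, and $\Fp$ is trivially excellent.

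\textbf{Step 2.} By Néron--Popescu desingularization, a regular morphism of noetherian rings is a filtered colimit of smooth algebras. Hence $V_p\simeq \varinjlim_\alpha A_\alpha$, a filtered colimit of smooth $\Fp$-algebras $A_\alpha$.

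I expect this to be the only substantive input and the main obstacle, since it is a deep theorem. For a valuation ring one could alternatively cite \cite{popescu2021valuationringsdimensionlimits} or Zariski-style local uniformization results. I would simply cite Popescu's theorem.

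\textbf{Step 3.} Each $A_\alpha$ is Cartier smooth. Its cotangent complex is $\Omega^1_{A_\alpha/\Fp}[0]$, which is finite projective. The classical Cartier isomorphism for smooth $\Fp$-algebras gives the condition on $C^{-1}$.

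\textbf{Step 4.} Every ingredient commutes with filtered colimits:
\begin{itemize}
\item the cotangent complex, $L_{V_p/\Fp}\simeq\varinjlim L_{A_\alpha/\Fp}$;
\item the modules $\Omega^i$ and the de Rham complex;
\item cohomology of complexes, since filtered colimits are exact;
\item the Cartier map, which is natural in the algebra.
\end{itemize}
Therefore $L_{V_p/\Fp}\simeq\varinjlim_\alpha \Omega^1_{A_\alpha/\Fp}\otimes_{A_\alpha}V_p$ is concentrated in degree $0$. It is flat over $V_p$, because it is a filtered colimit of projective $V_p$-modules (base changes of projective $A_\alpha$-modules), and Lazard's theorem gives flatness.

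Likewise $C^{-1}_{V_p}=\varinjlim_\alpha C^{-1}_{A_\alpha}$ is a filtered colimit of isomorphisms, hence an isomorphism. This proves that $V_p$ is Cartier smooth, and with it both assertions of \cref{lem:cotangent}.
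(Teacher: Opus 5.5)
Your proof is correct. It rests on the same input as the paper, Néron--Popescu desingularization, but packages it differently.

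The paper's proof only checks that $V_p$ is a regular noetherian $\Fp$-algebra. It then quotes from \cite[Definition~E.11]{BL22a} the general fact that every such algebra is Cartier smooth.

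You instead prove that fact for $V_p$ directly, in two steps. First you write $V_p$ as a filtered colimit of smooth $\Fp$-algebras via N\'eron--Popescu, which is exactly the argument the paper gives separately in \cref{lem:regular}. Then you check that each defining condition is stable under filtered colimits: the degree-$0$ flat cotangent complex and the inverse Cartier isomorphism. This is the standard justification behind the fact the paper cites.

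Your version is self-contained and makes explicit that Popescu's theorem is the real input, so \cref{lem:cotangent} and \cref{lem:regular} are logically one argument. The paper's version is shorter at the cost of a black-box citation.

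Two minor points:
\begin{itemize}
\item $V_p$ is flat over $\Fp$ simply because $\Fp$ is a field; being a domain of characteristic $p$ is beside the point.
\item A filtered colimit of flat modules is flat because $\mathrm{Tor}$ commutes with filtered colimits. Lazard's theorem is not needed for this direction.
\end{itemize}
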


\begin{proof}
By \cref{lem:regular fact}, $V_p$ is a regular ring. One can check directly since a DVR has exactly two prime ideals: both localizations are regular local rings. Hence every regular noetherian $\Fp$-algebra is Cartier smooth over $\Fp$ by \cite[Definition~E.11]{BL22a}. Condition (2) of Cartier smoothness implies that $L\Omega^1_{V_p/\Fp} \simeq L_{V_p/\Fp}$ is flat and concentrated in degree $0$.
\end{proof}

\begin{lemma}\label[lemma]{lem:regular}
$V_p$ is a filtered colimit of smooth $\Fp$-algebras.
\end{lemma}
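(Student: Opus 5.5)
The plan is to get the statement from Popescu's theorem (general Néron desingularization). That theorem says a ring map $A\to B$ of noetherian rings is regular if and only if $B$ is a filtered colimit of smooth $A$-algebras. We take $A=\Fp$ and $B=V_p$, so it suffices to show that $\Fp\to V_p$ is a \emph{regular} morphism: flat, with geometrically regular fibers.

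First, flatness. $\Fp$ is a field, so every module over it is flat. Second, the fibers. There is only one fiber, $V_p$ itself viewed over $\Fp$, so we must show $V_p\otimes_{\Fp}\ell$ is regular for every finite field extension $\ell/\Fp$. A finite extension of $\Fp$ is separable, since $\Fp$ is perfect. Hence $V_p\to V_p\otimes_{\Fp}\ell$ is finite étale, in particular smooth. By \cref{lem:regular fact}(2), together with the regularity of $V_p$ already shown in the proof of \cref{lem:cotangent}, each $V_p\otimes_{\Fp}\ell$ is regular. This settles geometric regularity.

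The key conceptual point, and the only genuine obstacle, is this: over a perfect field, regular implies geometrically regular. This holds even though $V_p$ is neither excellent nor essentially of finite type, and excellence is never needed, because Popescu's theorem requires only that source and target be noetherian. One might worry that the definition of a regular morphism demands regularity of $\kappa(\mathfrak p)\otimes_{\Fp}\ell$ for finitely generated, possibly inseparable, field extensions $\ell$ of the base residue field. Here the base residue field is $\Fp$, and every finitely generated extension of a perfect field is separably generated. Such a base change is then a localization of an étale extension of a polynomial extension $V_p[T_1,\dots,T_n]$. Polynomial extensions of regular rings are regular, and \cref{lem:regular fact}(2) applies to étale maps. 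So the fibers $\Frac(V_p)\otimes\ell$ and $k\otimes\ell$ (and $V_p\otimes\ell$ itself) are regular. I would write this reduction out briefly.

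If one wants to avoid the full strength of Popescu, a more hands-on alternative would also work. By \cref{lem:cotangent}, $V_p$ is Cartier smooth and $L_{V_p/\Fp}$ is flat in degree $0$. A one-dimensional noetherian version of Néron desingularization, due to Néron, and Artin for DVRs (see also \cite{popescu2021valuationringsdimensionlimits}), then gives the statement for DVRs whose fraction field extension is separable over the base. Here $\Frac(V_p)=k(x,y)$ is separable over $\Fp$, since $\Fp$ is perfect. I would cite Popescu as the main route and mention this as the alternative.
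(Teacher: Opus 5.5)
Your proposal is correct and follows the paper's proof essentially verbatim: you apply Popescu's theorem to $\Fp\to V_p$, get flatness from working over a field, and get geometric regularity of the single fiber $V_p$ by étale base change along separable extensions of the perfect field $\Fp$; your alternative matches the paper's second route via Popescu's result on one-dimensional valuation rings. One small slip: $\Frac(V_p)\otimes\ell$ and $k\otimes\ell$ are not fibers of $\Fp\to V_p$ (the only fiber is $V_p$ itself), but this is harmless, since you do prove that $V_p\otimes_{\Fp}\ell$ is regular, which is all that is needed.
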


\begin{proof}
This can be proved in two ways:
\begin{enumerate}
\item The structure map $\Fp \to V_p$ is a regular homomorphism: it is clearly flat. Its unique fiber is $V_p$, whose geometric regularity holds because for any given finite field extension  $L/\Fp$, $V_p\to V_p\otimes_{\Fp}L$ is \'etale and thus $V_p\otimes_{\Fp}L$ is regular by \cref{lem:regular fact}. Thus Popescu's theorem \cite[\href{https://stacks.math.columbia.edu/tag/07GC}{Tag 07GC}]{stacks-project} applies.
\item Alternatively, one can invoke \cite[Theorem~1(1)]{popescu2021valuationringsdimensionlimits} directly.
\end{enumerate}
\end{proof}

\begin{proposition}\label[proposition]{prop:identification}
There is a canonical equivalence
\[
\RGamma\bigl(\wcartHT_{\Spf(V_p)}, \calO\bigr) \;\simeq\; \Prismbar_{V_p/\Zp}.
\]
\end{proposition}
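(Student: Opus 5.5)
Since the presentation of \cref{lem:Vp}'s remark shows we cannot use the $BG$ description, the plan is to reduce to the smooth case via \cref{lem:regular} and then pass to the colimit on both sides.

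First, for a smooth $\Fp$-algebra $A$ (or more generally a quasisyntomic $p$-complete ring), Bhatt--Lurie identify $\RGamma(\wcartHT_{\Spf(A)},\calO)$ with the absolute Hodge--Tate cohomology $\Prismbar_{A}$, i.e.\ the reduction mod $I$ of absolute prismatic cohomology $\Prism_{A/\Zp}$ (\cite[Theorem~5.5.1 / Section~5]{BL22a}, via unwinding of $\wcartHT_X = \WCart^{\mathrm{HT}}\times_{\WCart}\WCart_X$ and the fact that $\WCart_X$ computes absolute prismatic cohomology). We will write $\Prismbar_{A/\Zp}$ for this object.

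Second, write $V_p = \operatorname{colim}_i A_i$ as a filtered colimit of smooth $\Fp$-algebras by \cref{lem:regular}. Both functors should commute with this colimit after $p$-completion. On the Hodge--Tate side, $\Prismbar_{-}$ is left Kan extended from polynomial algebras (its conjugate filtration has graded pieces $\wedge^i L_{-/\Zp}[-i]\{-i\}$ computed by the cotangent complex, which commutes with filtered colimits), so $\Prismbar_{V_p/\Zp}\simeq (\operatorname{colim}_i\Prismbar_{A_i/\Zp})^\wedge_p$; here \cref{lem:cotangent} ensures the conjugate filtration on $\Prismbar_{V_p/\Zp}$ is well behaved (exhaustive, each graded piece $\Omega^i_{V_p/\Fp}$-type in a single degree), so the comparison can be checked on graded pieces. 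On the stack side, the prismatization is compatible with the inverse system: $\wcartHT_{\Spf(V_p)}\simeq \lim_i \wcartHT_{\Spf(A_i)}$ as functors on $p$-nilpotent rings (since $\WCart_X(S)$ involves maps $\operatorname{Spec}\overline{W(S)}\to X$ and, for affine $X=\operatorname{Spec}$ of a colimit, maps out of it are a limit), and global sections of $\calO$ on such a limit should be the colimit of global sections.

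The main obstacle is precisely this last interchange: cohomology of $\calO$ on a cofiltered limit of stacks need not equal the colimit of cohomologies without finiteness. I would try to avoid it by working locally: cover $\wcartHT_{\Spf(V_p)}$ by the quasisyntomic/flat site, i.e.\ use the description $\RGamma(\wcartHT_X,\calO)\simeq \lim_{(B,J)}\overline{B}$ over the absolute prismatic site of $X$, which by definition is $\Prismbar_{V_p}$ via \v{C}ech--Alexander style comparison. Concretely, choose a quasisyntomic cover $V_p\to V_p^{\mathrm{perf}}$ (relatively perfect, valid because $L_{V_p/\Fp}$ is flat in degree $0$ by \cref{lem:cotangent}, so $V_p$ is quasisyntomic), note that $\wcartHT$ of a quasiregular semiperfectoid is affine with coordinate ring $\Prismbar$, and use flat descent along the \v{C}ech nerve on both sides; this gives the equivalence canonically, with the colimit argument serving only as a consistency check.
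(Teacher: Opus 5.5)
\textbf{Verdict.} Your strategy is sound in substance, but the final identification with $\Prismbar_{V_p/\Zp}$ is made only by renaming, and that is a genuine gap.

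\textbf{What matches the paper.} The workable part of your proposal is the last paragraph, and it is essentially the paper's argument. The only input specific to $V_p$ is that $L_{V_p/\Fp}$ is flat in degree $0$ (\cref{lem:cotangent}). You use it to show that $V_p$ is quasisyntomic and that $V_p\to V_p^{\mathrm{perf}}$ is a quasisyntomic cover. This is correct: $L_{V_p^{\mathrm{perf}}/V_p}\simeq L_{V_p/\Fp}\otimes_{V_p}V_p^{\mathrm{perf}}[1]$, and faithful flatness follows from Kunz's theorem. The paper uses the same flatness to verify condition $(\ast)$ of \cite[Theorem~7.20(2)]{BL22b}.

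\textbf{The descent step is not routine.} On the stack side, ``flat descent along the \v{C}ech nerve on both sides'' requires two facts. First, the induced map of Hodge--Tate stacks must be a flat cover. Second, the Hodge--Tate stacks of the quasiregular semiperfect terms must be affine with ring of functions $\Prismbar$. That is exactly the nontrivial content of Bhatt--Lurie's comparison theorem, so you should invoke it rather than present it as a consequence of descent. The colimit route in your second paragraph can be dropped; as you note yourself, it stalls on exchanging $\RGamma$ with a cofiltered limit of stacks.

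\textbf{The gap.} Your argument computes absolute Hodge--Tate cohomology. The $\Prismbar_{V_p/\Zp}$ in the statement is the derived Hodge--Tate complex relative to the crystalline prism $(\Zp,(p))$. Its conjugate filtration has graded pieces $L\widehat{\Omega}^n_{V_p/\Fp}[-n]\{-n\}$, as used in \cref{prop:Hn}. You connect the two only by declaring notation.

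\textbf{Why the two objects agree.} They do agree, for a specific reason you need to supply:
\begin{enumerate}
\item If $(B,J)$ is a prism with a map $V_p\to B/J$, then $p\in J$.
\item Write $p=du$ locally with $d$ distinguished. Applying $\delta$ modulo $(p,d)$ gives $1\equiv u^p\delta(d)$. So $u$ is a unit and $J=(p)$.
\item Since $(\Zp,(p))$ maps uniquely to every such prism, the absolute prismatic site of $V_p$ coincides with the relative site over $(\Zp,(p))$, and $\overline{\calO}_{\Prism}=\calO_{\Prism}/p$.
\end{enumerate}
This is the role of the paper's first step, \cite[Remark~5.3]{BL22b}. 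You also need the comparison between site-theoretic and derived prismatic cohomology, \cite[Corollary~4.2.6]{BL22a}, because $\Prismbar_{V_p/\Zp}$ is the derived object.

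\textbf{Why this is more than renaming.} Your second paragraph asserts that the conjugate graded pieces are $\wedge^i L_{-/\Zp}[-i]\{-i\}$. That is wrong for $\Prismbar_{-/\Zp}$; it is the shape of the conjugate filtration on the diffracted Hodge complex of \cite{BL22a}, not on Hodge--Tate cohomology. For example, since $L_{\Fp/\Zp}\simeq\Fp[1]$, your formula would make every graded piece of $\Prismbar_{\Fp/\Zp}$ a copy of $\Fp$ in degree $0$. In fact $\Prismbar_{\Fp/\Zp}\simeq\Fp$.
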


\begin{proof}
We chain the following identifications from \cite{BL22b,BL22a}:
\begin{align*}
\RGamma(\wcartHT_{\Spf(V_p)}, \calO)
&\simeq \RGamma(\wcartHT_{\Spf(V_p)/\Zp}, \calO)
&&\text{(\cite[Remark~5.3]{BL22b})} \\[2pt]
&\simeq \RGamma(\WCart_{\Spf(V_p)/\Zp}, \calO) \otimes_{\Zp}^L \Fp
 \\[2pt]
&\simeq \RGamma_{\Prism}(\Spf(V_p)/\Zp) \otimes_{\Zp}^L \Fp
&&\text{(\cite[Theorem~7.20(2)]{BL22b})} \\[2pt]
&\simeq \Prism_{V_p/\Zp} \otimes_{\Zp}^L \Fp
&&\text{(\cite[Corollary~4.2.6]{BL22a})} \\[2pt]
&=: \Prismbar_{V_p/\Zp}.
\end{align*}
For the third isomorphism: $V_p$ is classical, bounded (as $p=0$), and condition $(\ast)$ holds because  $V_p/pV_p = V_p$ and $L_{V_p/\Fp}$ has Tor-amplitude $[0,0]$ by \cref{lem:cotangent}. Hence \cite[Theorem~7.20(2)]{BL22b} applies. For the fourth isomorphism: the $p$-completion of $V_p$ is itself.
\end{proof}

\subsection{Conjugate filtration and proof of the main theorem}

\begin{proposition}\label[proposition]{prop:Hn}
For every $n \ge 0$,
\[
H^n(\Prismbar_{V_p/\Zp}) \;\simeq\; \Omega^n_{V_p/\Fp}\{-n\}.
\]
\end{proposition}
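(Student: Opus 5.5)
We will prove \cref{prop:Hn} with the Hodge--Tate comparison (the conjugate filtration) for the derived Hodge--Tate complex $\Prismbar_{V_p/\Zp}$, combined with the Cartier smoothness in \cref{lem:cotangent}.

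\textbf{Step 1: the conjugate filtration.} Recall that $\Prismbar_{R/\Zp}$ carries an increasing, exhaustive conjugate filtration $\Fil^{\conj}_\bullet$. It is constructed by left Kan extension from polynomial algebras, and its graded pieces are
\[
\gr^{\conj}_n \Prismbar_{R/\Zp} \simeq \bigl(\wedge^n L_{R/\Zp}\bigr)^{\wedge}_p[-n]\{-n\}.
\]
Take $R=V_p$. We have $p=0$ in $V_p$, and $L_{\Fp/\Zp}\simeq \Fp[1]$, which is free of rank one. So there is a transitivity triangle $V_p[1]\to L_{V_p/\Zp}\to L_{V_p/\Fp}$. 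This triangle produces an extra term, which we must control. We therefore work with the relative Hodge--Tate comparison over the crystalline prism $(\Zp,(p))$. In that setting $\Prismbar_{V_p/\Zp}$ is identified with the derived de Rham complex $\mathrm{dR}_{V_p/\Fp}$, and the conjugate filtration becomes the classical conjugate filtration, with graded pieces $\wedge^n L_{V_p/\Fp}[-n]$. The Breuil--Kisin twist $\{-n\}$ is trivialized only noncanonically, so we keep it as a label. We can cite \cite[Section~4]{BL22a} for the Hodge--Tate comparison of the derived absolute complex. As a cross-check, for a smooth $\Fp$-algebra $P$ we have $H^n(\Prismbar_{P/\Zp})\simeq \Omega^n_{P/\Fp}\{-n\}$ by the classical Hodge--Tate/Cartier comparison.

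\textbf{Step 2: reduction to the smooth case.} By \cref{lem:regular}, $V_p=\operatorname{colim}_i P_i$ is a filtered colimit of smooth $\Fp$-algebras. Both sides of the claimed isomorphism commute with filtered colimits:
\begin{itemize}
\item On the left, $\Prismbar_{-/\Zp}$ is left Kan extended, and no completion is needed because $p=0$, so each $H^n$ commutes with filtered colimits.
\item On the right, $\Omega^n_{-/\Fp}$ commutes with filtered colimits.
\end{itemize}
For each $P_i$ the conjugate spectral sequence degenerates: its graded pieces $\Omega^n_{P_i}[-n]$ are concentrated in distinct single degrees, so $H^n(\Prismbar_{P_i/\Zp})\simeq\Omega^n_{P_i/\Fp}\{-n\}$. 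These isomorphisms are natural in $P_i$, since they are induced by the canonical conjugate filtration and the Hodge--Tate comparison map $\Omega^n\to H^n\{n\}$. Passing to the colimit gives the statement for $V_p$.

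As an alternative route that avoids the colimit, use \cref{lem:cotangent} directly. It gives $L_{V_p/\Fp}\simeq\Omega^1_{V_p/\Fp}$, flat in degree $0$, so $\wedge^n L_{V_p/\Fp}\simeq\Omega^n_{V_p/\Fp}$. The graded piece $\gr_n$ then sits in cohomological degree $n$ alone. The spectral sequence degenerates for degree reasons, and because the filtration is exhaustive, $H^n$ equals $H^n(\gr_n)$.

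\textbf{Main obstacle.} The delicate point is the bookkeeping in Step 1. We must make sure we use the version of the conjugate filtration whose graded pieces involve $L_{V_p/\Fp}$ rather than $L_{V_p/\Zp}$; this is exactly what the relative-to-$(\Zp,(p))$ identification provides. We must also make sure the $\{-n\}$ twists match. The colimit argument in Step 2 sidesteps most of this, because it only needs the known smooth case together with naturality, so it is the route I would write up first.
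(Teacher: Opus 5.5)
Your proposal is correct, and your ``alternative route'' is essentially the paper's proof. The paper uses the relative conjugate filtration over $(\Zp,(p))$, whose graded pieces $L\widehat{\Omega}^n_{V_p/\Fp}[-n]\{-n\}$ already involve $L_{V_p/\Fp}$. So the $V_p[1]$ term you worry about in Step~1 never appears, and both the formula with $\wedge^n L_{R/\Zp}$ and the detour through derived de Rham cohomology can be dropped. From the flatness of $L_{V_p/\Fp}$ in \cref{lem:cotangent}, the paper deduces that this filtration is the Postnikov filtration. Only then does it use Popescu's presentation (or Lazard's theorem) to identify the derived exterior power $L\Omega^n_{V_p/\Fp}$ with $\Omega^n_{V_p/\Fp}$.

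Your preferred route applies Popescu's presentation to the whole Hodge--Tate complex instead. Since $\Prismbar_{-/\Zp}$ on $\Fp$-algebras is left Kan extended and takes values in $\Fp$-modules, $H^n$ commutes with the filtered colimit, and the natural smooth-case isomorphisms pass to the colimit. What this buys you is that it needs nothing beyond the classical Hodge--Tate comparison for smooth algebras and its functoriality, and flatness of $L_{V_p/\Fp}$ comes out as a byproduct rather than being an input. The paper's route instead isolates the real input, namely that $L_{V_p/\Fp}$ is discrete and flat. In its Lazard form it works verbatim for any $\Fp$-algebra with that property, however the flatness is obtained.
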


\begin{proof}
By \cite[Remark~4.1.7]{BL22a}, the Hodge--Tate complex $\Prismbar_{V_p/\Zp}$ carries an exhaustive increasing conjugate filtration $\Fil_{\bullet}^{\conj}$ whose graded pieces satisfy
\begin{equation}\label{eq:HT-gr}
\gr_n^{\conj} \Prismbar_{V_p/\Zp} \;\simeq\; L\widehat{\Omega}^n_{V_p/\Fp}[-n]\{-n\}.
\end{equation}
By \cref{lem:cotangent}, $L_{V_p/\Fp}$ is a flat $V_p$-module in degree $0$. Thus $L\widehat{\Omega}^1_{V_p/\Fp} = L\Omega^1_{V_p/\Fp} \simeq L_{V_p/\Fp}$ is $p$-completely flat. By \cite[Remark~4.1.9]{BL22a}, the conjugate filtration therefore coincides with the Postnikov filtration, whence $\gr_n^{\conj} \simeq H^n[-n]$. Equating with \eqref{eq:HT-gr} yields
\begin{equation}\label{eq:mid}
H^n(\Prismbar_{V_p/\Zp})[-n] \;\simeq\; L\widehat{\Omega}^n_{V_p/\Fp}[-n]\{-n\}.
\end{equation}

It remains to identify $L\widehat{\Omega}^n_{V_p/\Fp}$. By \cref{lem:regular}, write $V_p = \varinjlim_\alpha S_\alpha$ with each $S_\alpha$ a smooth $\Fp$-algebra of finite type. The functor $L\Omega^n_{-/\Fp}$ commutes with sifted (hence filtered) colimits by \cite[Construction~B.1]{BL22a}, so
\[
L\Omega^n_{V_p/\Fp} \simeq \varinjlim_\alpha L\Omega^n_{S_\alpha/\Fp} \simeq \varinjlim_\alpha \Omega^n_{S_\alpha/\Fp}[0]
\]
since each $S_\alpha$ is smooth over $\Fp$. K\"ahler differentials and exterior powers commute with filtered colimits, yielding $\varinjlim_\alpha \Omega^n_{S_\alpha/\Fp}[0] \simeq \Omega^n_{V_p/\Fp}[0]$. Since $p$-completion is trivial in this case, $L\widehat{\Omega}^n_{V_p/\Fp} \simeq \Omega^n_{V_p/\Fp}[0]$. Substituting into \eqref{eq:mid} gives the result. 

Alternatively, Lazard's theorem \cite[\href{https://stacks.math.columbia.edu/tag/058G}{Tag 058G}]{stacks-project} gives the same identification directly from the flatness of $\Omega^1_{V_p/\Fp}$, without invoking Popescu's theorem.
\end{proof}

\begin{lemma}\label[lemma]{lem:omega2} We have
$\Omega^2_{V_p/\Fp} \neq 0$.
\end{lemma}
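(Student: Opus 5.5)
To show $\Omega^2_{V_p/\Fp}\neq 0$, I will base change to the fraction field. By \cref{lem:cotangent}, $\Omega^1_{V_p/\Fp}$ is a flat $V_p$-module, so $\Omega^2_{V_p/\Fp}=\wedge^2\Omega^1_{V_p/\Fp}$ is flat as well. In particular it is torsion-free, and it injects into its localization. Since Kähler differentials and exterior powers commute with localization, this localization is
\[
\Omega^2_{V_p/\Fp}\otimes_{V_p} K \simeq \Omega^2_{K/\Fp}, \qquad K=\Frac(V_p)=k(x,y),
\]
using \cref{lem:Vp}(2). Hence it suffices to show $\Omega^2_{K/\Fp}\neq 0$. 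In fact we only need the easy direction: any element of $\Omega^2_{V_p/\Fp}$ whose image in $\Omega^2_{K/\Fp}$ is nonzero is itself nonzero, so flatness is not even needed.

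Next I compute $\Omega^1_{K/\Fp}$. Since $k=\overline{\Fp}$ is algebraic and separable over $\Fp$, we have $\Omega_{k/\Fp}=0$. The first fundamental exact sequence then gives $\Omega^1_{K/\Fp}\simeq\Omega^1_{K/k}$. As $K=k(x,y)$ is a localization of the polynomial ring $k[x,y]$, the module $\Omega^1_{K/k}$ is $K$-free on $dx,dy$. Therefore $\Omega^2_{K/\Fp}$ is one-dimensional over $K$, spanned by $dx\wedge dy$.

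Finally, $x,y\in V_p$. Indeed, $x\mapsto t$ and $y\mapsto p(t)$ both lie in $k[[t]]$, since $p(t)$ is a non-unit power series. So $dx\wedge dy$ is an element of $\Omega^2_{V_p/\Fp}$ whose image in $\Omega^2_{K/\Fp}$ is the nonzero basis vector. Hence $\Omega^2_{V_p/\Fp}\neq 0$.

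The only point needing care is the compatibility of $\Omega$ with localization, together with the identification $\Omega_{K/\Fp}=\Omega_{K/k}$. Both are standard. I do not expect a real obstacle; the substantive content is simply that $V_p$ has transcendence degree $2$ over $k$ despite being one-dimensional.
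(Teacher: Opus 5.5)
Your proof is correct and follows essentially the same route as the paper: pass to $K=k(x,y)$, use $\Omega^1_{k/\Fp}=0$ to identify $\Omega^1_{K/\Fp}\simeq\Omega^1_{K/k}$, which is free on $dx,dy$, and deduce $\Omega^2_{K/\Fp}\neq 0$. Your added observation that $dx\wedge dy$ already lies in $\Omega^2_{V_p/\Fp}$ and maps to a nonzero element is a slight simplification: it needs only functoriality of $\Omega^2$ along $V_p\to K$, with no appeal to flatness or to compatibility with localization.
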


\begin{proof}
By \cref{lem:Vp}(2), $\Frac(V_p) = k(x,y)$. Since $k/\Fp$ is a filtered colimit of \'etale extensions, $\Omega^1_{k/\Fp} = 0$. Localizing K\"ahler differentials gives
\[
\Omega^1_{V_p/\Fp} \otimes_{V_p} k(x,y)
\simeq \Omega^1_{k(x,y)/\Fp}
\simeq \Omega^1_{k(x,y)/k}
\simeq k(x,y)\,dx \oplus k(x,y)\,dy,
\]
a $k(x,y)$-vector space of dimension $2$. Taking second exterior powers,
\[
\Omega^2_{V_p/\Fp} \otimes_{V_p} k(x,y)
\simeq \bigwedge^2_{k(x,y)} \Omega^1_{k(x,y)/\Fp}
\simeq k(x,y) \neq 0,
\]
so $\Omega^2_{V_p/\Fp} \neq 0$.
\end{proof}

\begin{proof}[\textbf{Proof of \cref{thm:main}}]
\cref{prop:identification} and \cref{prop:Hn} give
\[
H^2(\wcartHT_{\Spf(V_p)}, \calO)
\simeq H^2(\Prismbar_{V_p/\Zp})
\simeq \Omega^2_{V_p/\Fp}\{-2\}.
\]
By \cref{lem:omega2}, this is nonzero. Thus $H^2 \neq 0$, while $\dim(V_p) = 1$, so the cohomological dimension of $\wcartHT_{\Spf(V_p)}$ is at least $2$. By \cref{lem:Vp}, $V_p$ satisfies all the hypotheses of \cref{conj:original}, which is therefore false.
\end{proof}

\begin{remark}[Why the limit argument fails]\label[remark]{rem:limit-failure}
\cite[Remark~10.2]{BL22b} proposes reducing the regular $\Fp$-algebra case to the smooth case via a limit argument. For $V_p$, this reduction cannot yield the bound $\cd \le 1$. Indeed, write $V_p = \varinjlim_j S_j$ as in \cref{lem:regular}, and set $A_j := \im(S_j \to V_p)$. For $j$ sufficiently large, $A_j$ contains both $x$ and $y$, so $\Fp[x,y] \subset A_j$ and both are of finite type over $\Fp$; thus
\[
\dim(A_j) = \dim \Fp[x,y] + \trdeg_{\Fp(x,y)}(\Frac(A_j)) \ge 2.
\]
Hence $\dim(S_j) \ge 2$, and the smooth-case computation does not give the desired bound $\le 1$.
\end{remark}

\begin{remark}[More counterexamples]\label[remark]{rem:more-examples}
The construction generalizes in several directions.
\begin{enumerate}
\item There are uncountably many choices of the transcendental power series $p(t) \in k[[t]]$, each yielding a distinct non-excellent DVR; this follows from the proof of \cite[Corollary~4.4]{datta2018excellenceprimecharacteristic}, which shows that distinct $p(t)$ and $q(t)$ give distinct valuation rings.
\item One may embed $k(x_0, \dots, x_n)$ into $k((t))$ for any $n \ge 2$ by sending $x_0 \mapsto t$ and $x_i \mapsto p_i(t)$ ($i \ge 1$) for power series $p_i(t) \in k[[t]]$ algebraically independent over $k(t)$; such $p_i(t)$ exist because $k(t)$ is countable infinite whereas 
\[
|k((t))|=\max\{|k(t)|,\trdeg(k((t))/k(t))\} \implies \trdeg(k((t))/k(t))=|k((t))|.
\]
This yields a non-excellent DVR $V$ of the function field $k(x_0, \dots, x_n)$ with residue field $k$ and $\dim(V) = 1$. The same proof as that of \cref{thm:main} shows: 
\[
H^{n+1}(\wcartHT_{\Spf(V)}, \calO) \simeq \Omega^{n+1}_{V/\Fp}\{-(n+1)\} \neq 0.
\]

\item The same argument of (2) also admits a countably generated variant, which gives a DVR $V_\infty$ with fraction field $k(x_0,x_1,\ldots)$ and $H^m(\wcartHT_{\Spf(V_\infty)},\calO)\neq 0$ for every finite $m$.

\item Moreover, one can produce counterexamples of \emph{any} Krull dimension $d \ge 1$. Set $$R_d := V_p[t_1, \dots, t_{d-1}]_{(x, t_1, \dots, t_{d-1})}.$$
By \cref{lem:regular fact}, $R_d$ is a regular local ring of dimension $d$ with perfect residue field $k$, and it is trivially $p$-complete.
Since $R_d/(t_1,\dots,t_{d-1})R_d=(V_p)_{(x)}=V_p$ and excellence is stable under taking quotients, $R_d$ cannot be excellent. Its fraction field $\Frac(R_d) = k(x, y, t_1, \dots, t_{d-1})$ has transcendence degree $d+1$ over $k$, so $\Omega^1_{R_d/\Fp}$ has generic rank $d+1$ and $\Omega^{d+1}_{R_d/\Fp} \neq 0$.

The computation of \cref{thm:main} carries over verbatim: $R_d$ is a regular noetherian $\Fp$-algebra by \cref{lem:regular fact}, hence Cartier smooth over $\Fp$, and the conjugate filtration coincides with the Postnikov filtration, yielding
$$H^{d+1}(\wcartHT_{\Spf(R_d)}, \calO) \simeq \Omega^{d+1}_{R_d/\Fp}\{-(d+1)\} \neq 0,$$
while $\dim(R_d) = d$.
\end{enumerate}
\end{remark}

\begin{remark}[Excellent repair]\label[remark]{rem:excellent-repair}
We note that \cref{conj:original} holds under the additional hypothesis that $R$ is excellent. Indeed, set $S = R/pR$. Since excellence is stable under taking quotients, $S$ is a noetherian excellent local $\Fp$-algebra with perfect residue field $k$. \cite[Corollary~2.6]{Kun76} implies that a noetherian local $\Fp$-algebra with perfect residue field is excellent if and only if it is $F$-finite. Hence $S$ is $F$-finite.

Applying \cite[Corollary~4.20]{BM_2023} with $\log_p[k:k^p] = 0$ yields $\cd(\wcartHT_{\Spf(R)}) \le \dim(R)$. Thus the excellent case gives a natural repair of the conjecture, while the example above shows that the original formulation cannot hold without further hypotheses. 

The $F$-finiteness hypothesis is used to control the cotangent complex $L_{(R/pR)/\Fp}$. For our non-$F$-finite $V_p$, $L_{V_p/\Fp}\simeq \Omega^1_{V_p/\Fp}[0]$ is flat but not finite projective. Indeed, the conormal
sequence for $V_p\twoheadrightarrow k=V_p/\mathfrak m$ gives
\[
    \mathfrak m/\mathfrak m^2
    \longrightarrow
    \Omega^1_{V_p/\Fp}\otimes_{V_p} k
    \longrightarrow
    \Omega^1_{k/\Fp}
    \longrightarrow 0.
\]
Since $k=\overline{\Fp}$, the last term vanishes, so the closed fiber has dimension at most $1$, which already rules out finite projectivity by comparison with the generic fiber. One can also use the discussion preceding \cite[Corollary~4.20]{BM_2023} to see this.
\end{remark}

\begin{remark}\label[remark]{rem:relation-B23}
\cite[Theorem~C]{B23} develops a general framework for $p$-Cartier smooth algebras which, specialized to the prism $(\Z_p,(p))$ and the algebra $V_p$, also yields the formula $H^n(\overline{\Prism}_{V_p/\Z_p})\simeq\Omega^n_{V_p/\Fp}\{-n\}$ of \cref{prop:Hn}.
Our approach uses \cite{BL22a} and Popescu's theorem, which is more straightforward in the situations we care about. The other results of this note: the counterexample itself, the failure of the limit argument, and the excellent repair, are independent of \cite{B23}.
\end{remark}

\bibliographystyle{alpha}
\bibliography{preprint}{}

\end{document}